\documentclass{article}
\usepackage{latexsym}
\usepackage{epsfig}
\usepackage{amsmath}
\usepackage{amssymb}
\usepackage{amsthm}
\addtolength{\textheight}{108pt} \addtolength{\topmargin}{-60pt}
\oddsidemargin=-6pt \addtolength{\textwidth}{120pt}
\righthyphenmin=2

\newtheorem{theorem}{Theorem}[section]

\newtheorem{lemma}[theorem]{Lemma}

\theoremstyle{remark}

\theoremstyle{definition}

\begin{document}

\title{Optimal step length for the Newton method \\ near the minimum of a self-concordant function}

\author{Roland Hildebrand \thanks{%
Univ.\ Grenoble Alpes, CNRS, Grenoble INP, LJK, 38000 Grenoble, France
({\tt roland.hildebrand@univ-grenoble-alpes.fr}).}}

\maketitle

\begin{abstract}
In path-following methods for conic programming knowledge of the performance of the (damped) Newton method at finite distances from the minimizer of a self-concordant function is crucial for the tuning of the parameters of the method. The available bounds on the progress and the step length to be used are based on conservative relations between the Hessians at different points and are hence sub-optimal. In this contribution we use methods of optimal control theory to compute the optimal step length of the Newton method on the class of self-concordant functions, as a function of the initial Newton decrement, and the resulting worst-case decrease of the decrement. The exact bounds are expressed in terms of solutions of ordinary differential equations which cannot be integrated explicitly. We provide approximate numerical and analytic expressions which are accurate enough for use in optimization methods. As an application, the neighbourhood of the central path in which the iterates of path-following methods are required to stay can be enlarged, enabling faster progress along the central path during each iteration and hence fewer iterations to achieve a given accuracy.
\end{abstract}

\section{Introduction} \label{sec:intro}

The Newton method is a century-old second order method to find a zero of a vector field or a stationary point of a sufficiently smooth function. It is well-known that it is guaranteed to converge only in a neighbourhood of a solution, even on such well-behaved classes of objective functions as the self-concordant functions. In this neighbourhood the method converges quadratically. Farther away the step length has to be decreased to ensure convergence, leading to the \emph{damped} Newton method. Several rules have been proposed to choose the step length on different classes of cost functions or vector fields. Among these are line searches or path searches until some pre-defined condition is met \cite{Burdakov80,Ralph94}, strategies imported from gradient descent methods \cite[p.~37]{Nesterov18book}, and explicit formulas \cite[p.~24]{NesNem94}, \cite[p.~353]{Nesterov18book}. These rules provide sufficient conditions for convergence, but they are optimal only by the order of convergence \cite{Nesterov18book}.

A (damped) Newton step is given by the iteration
\[ x_{k+1} = x_k - \gamma_k(F''(x_k))^{-1}F'(x_k),
\]
where $\gamma_k = 1$ for a full step and $\gamma_k \in (0,1)$ if the step is damped. Here $F$ is the function to be minimized, assumed to be sufficiently smooth with a positive definite Hessian. A convenient measure to quantify the distance from the iterate $x_k$ to the minimizer is the \emph{Newton decrement} \cite{NesNem94}
\[ \rho_k = \sqrt{F'(x_k)^T(F''(x_k))^{-1}F'(x_k)} = ||F'(x_k)||^*_{x_k},
\]
where the norm of the gradient as measured in the local metric given by the inverse Hessian of $F$.

In this paper we investigate the following problems. Given $\rho_k$ and $\gamma_k$, what is the largest possible (worst-case) value $\rho_{k+1}$ of the decrement after the iteration? What is the optimal step-length $\gamma^*$, i.e., the value minimizing the upper bound on $\rho_{k+1}$ with respect to $\gamma_k$ for given $\rho_k$?

With the application to barrier methods for conic programming in mind we consider the class of \emph{self-concordant} functions. This class has been introduced in \cite{NesNem94}, because due to its invariance under affine coordinate transformations it is especially well suited for analysis in conjunction with the also affinely invariant Newton method. Self-concordant functions are locally strongly convex $C^3$ functions $F$ which satisfy the inequality
\[ |F'''(x)[h,h,h]| \leq 2(F''(x)[h,h])^{3/2}
\]
for all $x$ in the domain of definition and all $h$ in the tangent space at $x$. 
We shall now motivate the problem setting defined above by giving a short overview of the layout of path-following methods. For more details see \cite{NesNem94}. We consider the basic setup of a short-step primal path-following method.

\medskip

Consider the convex optimization problem $\min_{x \in X} \langle c,x \rangle$ with linear objective function, where $X \subset \mathbb R^n$ is a closed convex set. Suppose on the interior $D = X^o$ of the set a self-concordant function $F$ is defined which tends to infinity at the boundary of the set, $F|_{\partial X} = +\infty$. Such a situation arises within the framework of conic programming, in particular, linear, second-order conic, and semi-definite programming. Here $X$ is the feasible set of the primal problem, and $F$ is the restriction of the self-concordant barrier function to this set.

A primal path-following method generates a sequence $x_k \in D$ of iterates which lie in a neighbourhood of the \emph{central path} $x^*(\tau)$ of the problem and at the same time move along (follow) this path. Here for given $\tau > 0$ the point $x^*(\tau)$ is the unique minimizer of the composite function $F_{\tau}(x) = F(x) + \tau \langle c,x \rangle$ on $D$. For $\tau \to +\infty$ the central path and hence also the iterates $x_k$ converge to the optimal solution $x^*$ of the original problem.

In order to generate the next iterate $x_{k+1}$ from the current iterate $x_k$, a (damped) Newton step is made towards a target point $x^*(\tau_k)$ on the central path by attempting to minimize the composite function $F_{\tau_k}$. The performance of the method will depend on how the parameter $\tau_k$ of the target point and the step length $\gamma_k$ are chosen. The overall convergence rate of the method depends on how fast the sequence $\tau_k$ tends to $+\infty$. However, the larger $\tau_k$ at any given iterate is chosen, the farther the current point will lie from the target point, and the smaller the worst-case progress of the Newton iterate will be. The right choice of $\tau_k$ should optimally balance these two opposite trends.

Recall that the distance from the current iterate $x_k$ to the target point $x^*(\tau)$ is measured by the Newton decrement $\rho_k(\tau) = \sqrt{d_k^T(F''(x_k))^{-1}d_k}$, where $d_k = F'(x_k) + \tau c$ is the difference between the gradients of $F$ at the target and the current point. Note that $\rho_k^2(\tau)$ is a nonnegative quadratic function of $\tau$, with the leading coefficient given by $c^T(F''(x_k))^{-1}c$. The derivative $\frac{d\rho_k}{d\tau}$ is therefore bounded by $\sqrt{c^T(F''(x_k))^{-1}c}$.

The target point $x^*(\tau_k)$ is chosen such that $\tau_k$ is maximal under the constraint $\rho_k(\tau_k) \leq \overline{\lambda}$, where $\overline{\lambda}$ is a parameter of the method. The subsequent Newton step from $x_k$ to $x_{k+1}$ moves the iterate closer to the target point. More precisely, we have $\rho_{k+1}(\tau_k) \leq \underline{\lambda}$, where $\underline{\lambda}$ is some monotone function of $\overline{\lambda}$ which denotes an upper bound on the decrement at the next iterate. Since the preceding iteration from $x_{k-1}$ to $x_k$ also satisfied the constraint $\rho_{k-1}(\tau_{k-1}) \leq \overline{\lambda}$ and consequently $\rho_k(\tau_{k-1}) \leq \underline{\lambda}$, we may find a lower bound on the progress made by the iteration. Namely, from $\rho_k(\tau_k) = \overline{\lambda}$ and the bound on the derivative of $\rho_k$ we get $\tau_k - \tau_{k-1} \geq \frac{\overline{\lambda} - \underline{\lambda}}{\sqrt{c^T(F''(x_k))^{-1}c}}$.

It now becomes clear that we have to choose the parameter $\overline{\lambda}$ in order to maximize the difference $\overline{\lambda} - \underline{\lambda}$. The smaller $\underline{\lambda}$ as a function of $\overline{\lambda}$, the larger are both the optimal value $\lambda^*$ of $\overline{\lambda}$ and the difference $\lambda^* - \lambda_*$, where $\lambda_* = \underline{\lambda}(\lambda^*)$, and the faster the path-following method will progress. Recall that $\overline{\lambda}$ denotes the Newton decrement before the Newton step, while $\underline{\lambda}$ is an upper bound on the decrement after the Newton step. It is therefore of interest to use bounds $\underline{\lambda}(\overline{\lambda})$ on the decrement which are as tight as possible. This leads us exactly to the problem setting defined above.

\medskip

The currently used upper bound $\underline{\lambda}$ is based on the following estimate of the Hessian of a self-concordant function $F$ at a point $x$ near the current iterate $x_k$ \cite[Theorem 2.1.1]{NesNem94} (see also \cite[Theorem 5.1.7]{Nesterov18book})
\[ (1-||x-x_k||_{x_k})^2F''(x_k) \preceq F''(x) \preceq (1-||x-x_k||_{x_k})^{-2}F''(x_k).
\]
Here $||x-x_k||_{x_k} \in (0,1)$ is the distance between $x$ and $x_k$ as measured in the local metric at $x_k$.

Based on this estimate and assuming a full Newton step one obtains \cite[Theorem 5.2.2.1]{Nesterov18book}
\[ \underline{\lambda} = \left( \frac{\overline{\lambda}}{1-\overline{\lambda}} \right)^2,
\]
leading to $\lambda^* \approx 0.2291$ and $\lambda^* - \lambda_* \approx 0.1408$.

If instead one applies a damping coefficient $\gamma_k = \frac{1+\rho_k}{1+\rho_k+\rho_k^2}$, then one obtains the bound \cite[Theorem 5.2.2.3]{Nesterov18book}
\[ \underline{\lambda} = \overline{\lambda}^2\left(1+\overline{\lambda}+\frac{\overline{\lambda}}{1+\overline{\lambda}+\overline{\lambda}^2}\right).
\]
Then $\lambda^*$ increases to $\approx 0.2910$, the difference $\lambda^* - \lambda_*$ to $\approx 0.1638$, and the step length is given by $\frac{1+\lambda^*}{1+\lambda^*+(\lambda^*)^2} \approx 0.9384$.

\medskip

In this contribution we perform an exact worst-case analysis of the performance of the Newton iterate by reformulating it as an optimal control problem. The values of $\lambda^*$ for the full and the optimal damped Newton step turn out to be given by $\approx 0.3943$ and $\approx 0.4429$, respectively. The respective values of $\lambda^* - \lambda_*$ are given by $\approx 0.2184$ and $\approx 0.2300$, which represents an improvement by more than 40\%. The tight bound on $\rho_{k+1}$ and the optimal damping coefficient $\gamma_k$ as functions of $\rho_k$ are depicted in Fig.~\ref{fig:bounds}.

\begin{figure}
\centering
\includegraphics[width=13.85cm,height=5.05cm]{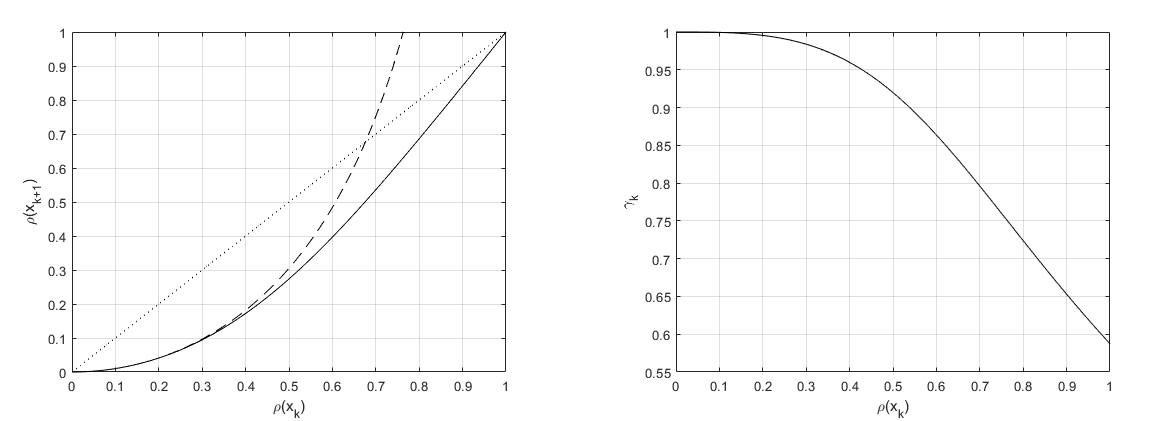}
\label{fig:bounds}
\caption{Upper bounds on the Newton decrement $\rho_{k+1}$ (left: dashed --- full Newton step, solid --- optimal damped Newton step) and optimal damping coefficient $\gamma_k$ (right) as a function of the current Newton decrement $\rho_k$.}
\end{figure}

The idea of analyzing iterative algorithms by optimization techniques is not new. An exact worst-case analysis of gradient descent algorithms by semi-definite programming has been performed in \cite{deKlerkGlineurTaylor17,TaylorHendrickxGlineur17}. In these papers an arbitrary finite number of steps is analyzed, but the function classes are such that the resulting optimization problem is finite-dimensional.

In \cite{deKlerkGlineurTaylor20accepted} a performance analysis for a single step of the Newton method on self-concordant functions is conducted. The class of self-concordant functions is, however, overbounded by a class of functions with Lipschitz-continuous Hessian, and the gradient at the next iterate is measured in the local norm of the previous iterate or in the (algorithmically inaccessible) local norm of the minimum. This yields also a finite-dimensional optimization problem. In \cite{GaoGoldfarb19} it is shown that the step size $\gamma_k = \frac{1}{1+\rho_k}$ proposed in \cite{NesNem94} maximizes a lower bound on the progress if the latter is measured in terms of the decrease of the function value. Using the techniques of this paper, one can show that this step length is actually optimal for this performance criterion\footnote{Unpublished joint work with Anastasia S. Ivanova (Moscow Institute of Physics and Technology).}.

In our case the properties of the class of self-concordant functions do not allow to obtain tight constraints on gradient and Hessian values at different points without taking into consideration all intermediate points, and the problem becomes infinite-dimensional. Our techniques, however, are borrowed from optimal control theory and nevertheless allow to obtain optimal bounds.

The remainder of the paper is structured as follows. In Section \ref{sec:1dim} we analyze the Newton iteration for self-concordant functions in one dimension. In this case the problem can be solved analytically. In Section \ref{sec:2dim} we generalize to an arbitrary number of dimensions. It turns out that the general case is no more difficult than the 2-dimensional one due to the rotational symmetry of the problem, and is described by the solutions of a Hamiltonian dynamical system in a 4-dimensional space. In Section \ref{sec:behaviour} we analyze the solutions of the Hamiltonian system. We establish that for small enough step sizes the 1-dimensional solutions are still optimal, while for larger step sizes (including the optimal one) the solution is more complicated and has no analytic expression anymore. In Section \ref{sec:damped} we minimize the upper bound on the Newton decrement $\rho_{k+1}$ with respect to the step length in order to obtain the optimal damped Newton step. In turns out that in this case the system simplifies to ordinary differential equations (ODE) on the plane. The optimal step length and the optimal bound on the decrement are then described in terms of solutions of these ODEs. In Section \ref{sec:numerical} we provide approximate numerical and analytic expressions for the upper bound on the decrement and for the optimal step size. In Section \ref{sec:path} we provide numerical values for the optimal parameters in path-following methods, both for the full and the optimally damped Newton step. We illustrate the obtained performance gain on a simple example. In Section \ref{sec:conclusion} we provide a game-theoretic interpretation of our results and summarize our findings.

\section{One-dimensional case} \label{sec:1dim}

In this section we analyze the Newton iteration in one dimension. Given a damping coefficient and the value of the Newton decrement at the current iterate, we would like to find the maximal possible value of the decrement at the next iterate. We reformulate this optimization problem as an optimal control problem. The solution of this problem is found by presenting an analytic expression for the Bellman function.

Let $F: I \to \mathbb R$ be a self-concordant function on an interval, i.e., a $C^3$ function satisfying $F'' > 0$, $|F'''| \leq 2(F'')^{3/2}$. Suppose the Newton decrement $\rho_k = \sqrt{\frac{F'(x_k)^2}{F''(x_k)}}$ at some point $x_k \in I$ equals $a \in (0,1)$. Fix a constant $\gamma \in (0,1)$ and consider the damped Newton step
\[ x_{k+1} = x_k - \gamma\frac{F'(x_k)}{F''(x_k)}.
\]
The Newton decrement at the next iterate, which we suppose to lie in $I$, is given by $\rho_{k+1} = \sqrt{\frac{F'(x_{k+1})^2}{F''(x_{k+1})}}$. Our goal in this section is to find the maximum of $\rho_{k+1}$ as a function of the parameters $a,\gamma$.

First of all, we may use the affine invariance of the problem setup to make some simplifications. We move the current iterate to the origin, $x_k = 0$, normalize the Hessian at the initial point to $F''(0) = 1$ by scaling the coordinate $x$, and possibly flip the real axis to achieve $F'(0) = -a < 0$. Then we get $x_{k+1} = a\gamma$. Introducing the functions $h = F''(x)$, $g = F'(x)$, we obtain the optimal control problem
\[ g' = h,\qquad h' = 2uh^{3/2},\qquad u \in U = [-1,1]
\]
with initial conditions
\[ g(0) = -a, \qquad h(0) = 1
\]
and objective function
\[ \sqrt{\frac{g(a\gamma)^2}{h(a\gamma)}} \to \sup.
\]

Replacing the state variable $g$ by $y = h^{-1/2}g$ and the independent variable $x$ by $t = h^{1/2}\cdot(x - a\gamma)$, we obtain $\frac{dt}{dx} = h^{1/2}\cdot(1+ut)$ and the problem becomes
\[ \dot y = \frac{1 - uy}{1 + ut},\qquad u \in U = [-1,1]
\]
with initial conditions $y(-a\gamma) = -a$ and objective function $|y(0)| \to \sup$. The variable $h$ becomes disconnected from the relevant part of the dynamics and can be discarded. If the control is bang-bang, i.e., $u$ is piece-wise constant with values in $\{-1,1\}$, then the dynamics can be integrated explicitly, with solutions
\[ -u\log|1 - uy| + const = u\log|1 + ut| \qquad \Rightarrow \quad (1 - uy)(1 + ut) = const.
\]

Following the principles of dynamic programming \cite{Bellman}, consider the \emph{Bellman function} $B(t_0,y_0)$ denoting the maximal objective value which can be achieved on a system trajectory $y(t)$ satisfying the initial condition $y(t_0) = y_0$. It satisfies the \emph{Bellman equation}
\[ \max_{u \in U}\frac{dB}{dt} = \max_{u \in U}\left( \frac{\partial B}{\partial y}\dot y + \frac{\partial B}{\partial t} \right) = 0
\]
with boundary condition $B(0,y) = |y|$. After a bit of calculation one obtains the solution
\[ B(t,y) = \left\{ \begin{array}{rcl} -y+t+ty,&\quad& y \leq \frac{2(-1+\sqrt{1+t^3})}{t^2}, \\ 4-y+t-ty-4\sqrt{(1-y)(1+t)},&& \frac{2(-1+\sqrt{1+t^3})}{t^2} \leq y \leq -t, \\ y-t-ty,&& y \geq -t\end{array} \right.
\]
on the domain $(t,y) \in [-1,0] \times \mathbb R$. The curve $y = -t$ is a switching curve, where the control $u$ switches from $+1$ to $-1$. The curve $y = \frac{2(-1+\sqrt{1+t^3})}{t^2}$ is a dispersion curve, there are two optimal trajectories with controls $u = \pm1$ emanating from the points of this curve, leading to final points $y(0)$ of different sign. The optimal synthesis of the system is depicted in Fig.~\ref{fig:1synthesis}. The optimal objective value is given by the value of $B$ at the initial point $(-a\gamma,-a)$.

\begin{figure}
\centering
\includegraphics[width=13.03cm,height=6.16cm]{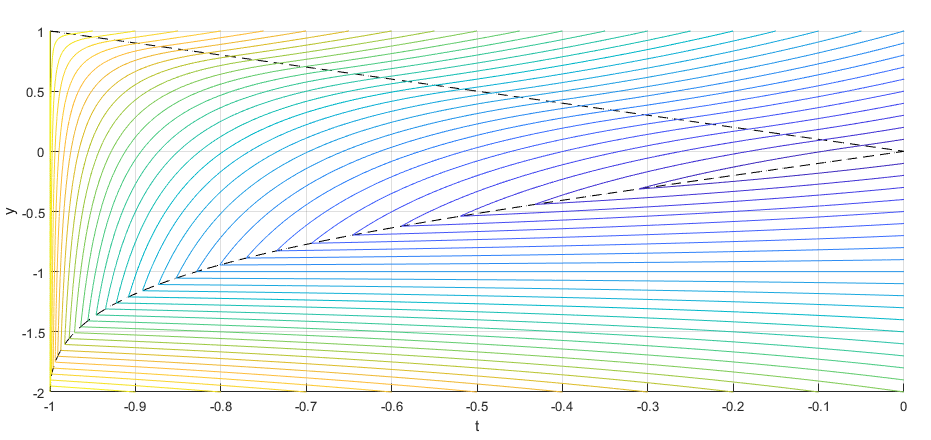}
\label{fig:1synthesis}
\caption{Optimal synthesis of the control problem modeling the one-dimensional case. The switching curve and the dispersion curve are dashed. The level curves of the Bellman function are at the same time the optimal trajectories of the system.}
\end{figure}

Let us now consider the result for the full Newton step. In this case $\gamma = 1$, and at the initial point is given by $(t,y) = (-a,-a)$. It hence lies between the dispersion curve and the switching curve, yielding the upper bound
\[ \rho_{k+1} \leq B(-a,-a) = 4-a^2-4\sqrt{1-a^2} = 4 - \rho_k^2 - 4\sqrt{1 - \rho_k^2}.
\]

For the optimally damped Newton step, we have to minimize $B(t,-a)$ with respect to $t \in [-a,0]$ for given $a$. The minimum is given by the intersection point $(\frac{2(1-\sqrt{1+a^3})}{a^2},-a)$ of the line $y = -a$ with the dispersion curve. Hence the optimal damping coefficient is given by $\gamma_k = \frac{2(\sqrt{1+\rho_k^3}-1)}{\rho_k^3}$. The corresponding upper bound on the Newton decrement is given by
\[ \rho_{k+1} \leq a+\frac{2(1-\sqrt{1+a^3})}{a^2}(1-a) = \frac{2(1-\rho_k)(1-\sqrt{1+\rho_k^3}) + \rho_k^3}{\rho_k^2}.
\]

\section{Reduction to a control problem in the general case} \label{sec:2dim}

In this section we perform a similar analysis for the case of self-concordant functions $F$ defined on $n$-dimensional domains. We reduce the problem to an optimal control problem in two state space dimensions. This results in a Hamiltonian system in 4-dimensional extended phase space.

First we simplify the dynamics as in the previous section. Introduce the vector-valued variable $g = F'$ and the matrix-valued variable $h = F'' = WW^T$, where $W$ is the lower-triangular factor of $h$ with positive diagonal. Let further ${\cal P}$ be the set of homogeneous cubic polynomials $p(x) = \sum_{i,j,k=1}^n p_{ijk}x_ix_jx_k$ which are bounded by 1 on the unit sphere $S^{n-1} \subset \mathbb R^n$. This is a compact convex set. The self-concordance condition then expresses the third derivatives of $F$ in the form
\[ \frac{\partial^3 F}{\partial x_i\partial x_j\partial x_k} = 2\sum_{r,s,t = 1}^n W_{ir}W_{js}W_{kt}p_{rst}, \qquad p \in {\cal P}.
\]
We shall need also the projection
\[ {\cal U} = \{ U \mid \exists\ p \in {\cal P}:\ U_{ij} = p_{ij1} \}
\]
of ${\cal P}$. The set ${\cal U}$ is a compact convex subset of the space of real symmetric matrices ${\cal S}^n$. Clearly it is overbounded by the set ${\cal U}' = \{ U \mid -I \preceq U \preceq I \}$. We shall also introduce the sets of lower-triangular matrices
\[ {\cal V} = \left\{ V \mid \frac{V+V^T}{2} \in {\cal U} \right\}, \qquad {\cal V}' = \left\{ V \mid \frac{V+V^T}{2} \in {\cal U}' \right\}
\]
which are compact and convex as well.

Using affine invariance, we may achieve the normalization $x_k = {\bf 0}$, $g({\bf 0}) = -ae_1$, $h({\bf 0}) = W({\bf 0}) = I$, $x_{k+1} = a\gamma e_1$. Here $a \in (0,1)$ is the Newton decrement $\rho_k$, $\gamma \in (0,1]$ the damping coefficient, and $e_1$ the first canonical basis vector. We consider the evolution of the variables $g,h,W$ only on the line segment joining $x_k$ and $x_{k+1}$, and may hence pass to a scalar variable $\tau \in [0,a\gamma]$, such that $x(\tau) = \tau e_1$ and $g(\tau) := g(x(\tau))$, $h(\tau) := h(x(\tau))$. The dynamics of the resulting control system can be written as
\[ \frac{dg}{d\tau} = he_1,
\]
\[ \frac{dh_{ij}}{d\tau} = 2\sum_{r,s,t = 1}^n W_{ir}W_{js}W_{1t}p_{rst} = 2W_{11}\sum_{r,s = 1}^n W_{ir}W_{js}U_{rs}, \ p \in {\cal P},\ U \in {\cal U}.
\]
It follows that
\[ \frac{dh}{d\tau} = \frac{dW}{d\tau}W^T + W\frac{dW^T}{d\tau} = 2W_{11}WUW^T
\]
and consequently
\[ W^{-1}\frac{dW}{d\tau} + \frac{dW^T}{d\tau}W^{-T} = 2W_{11}U,\qquad U \in {\cal U}.
\]
Since $W^{-1}\frac{dW}{d\tau}$ is lower-triangular, and $\frac{dW^T}{d\tau}W^{-T}$ is its transpose, we finally obtain
\[ \frac{dW}{d\tau} = W_{11}WV,\qquad V \in {\cal V}.
\]

We now replace $g$ by the variable $y = W^{-1}g$ and introduce a new independent variable $t = W_{11}\cdot(\tau - a\gamma)$. This variable then evolves in the interval $t \in [-a\gamma,0]$, and we have $\frac{dt}{d\tau} = W_{11}^2V_{11}\cdot(\tau - a\gamma) + W_{11} = W_{11}\cdot(tV_{11} + 1)$. The dynamics of the system becomes
\[ \dot y = \frac{1}{W_{11}(tV_{11} + 1)}(-W^{-1}(W_{11}WV)W^{-1}g + W^{-1}he_1) = \frac{-Vy + e_1}{tV_{11} + 1},\ V \in {\cal V}
\]
with initial condition $y(-a\gamma) = -ae_1$ and objective function
\[ \rho_{k+1} = \sqrt{g^T(x_{k+1})h^{-1}(x_{k+1})g(x_{k+1})} = ||y(0)|| \to \sup.
\]
The matrix-valued variable $W$ becomes disconnected and can be discarded.

For this problem the Bellman function cannot be presented in closed form, however, and we have to employ Pontryagins maximum principle \cite{PBGM62} to solve it. Introduce an adjoint vector-valued variable $p$, then the Pontryagin function and the Hamiltonian of the system are given by
\[ {\cal H}(t,y,p,V) = \frac{\langle p,-Vy + e_1 \rangle}{tV_{11} + 1},\qquad H(t,y,p) = \max_{V \in {\cal V}}\frac{\langle p,-Vy + e_1 \rangle}{tV_{11} + 1},
\]
respectively. The transversality condition is non-trivial at the end-point $t = 0$ and states that $p(0)$ equals the gradient $\frac{\partial ||y(0)||}{\partial y(0)} = \frac{y(0)}{||y(0)||}$ of the objective function. The optimal control problem is then reduced to the two-point boundary value problem
\begin{equation} \label{two_point_bdval}
\dot y = \frac{\partial H}{\partial p},\ \dot p = -\frac{\partial H}{\partial y},\ y(-a\gamma) = -ae_1,\ p(0) = \frac{y(0)}{||y(0)||}.
\end{equation}

Note that the problem setting is invariant with respect to orthogonal transformations of $\mathbb R^n$ which leave the distinguished vector $e_1$ invariant. Suppose that at some point on the trajectory the vectors $y,p$ span a plane containing $e_1$. Then at this point the component of the gradient of the Hamiltonian which is orthogonal to the plane vanishes due to symmetry, and the derivatives of $y,p$ are also contained in this plane. Therefore these variables will remain in this plane along the whole trajectory. Since at the end-point $t = 0$ the vectors $y,p$ are linearly dependent, we may assume without loss of generality that $y,p$ are for all $t$ contained in the plane spanned by the basis vectors $e_1,e_2$, or equivalently, that the dimension $n$ equals 2.

Then the set ${\cal P}$ is given by the set of bi-variate homogeneous cubic polynomials which are bounded by 1 on the unit circle, and can be expressed via the semi-definite representable set of nonnegative univariate trigonometric polynomials. This allows to obtain an explicit description of the set ${\cal V}$. Its boundary is given by the matrices
\[  V = \pm\frac{1}{2\cos^3\xi} \begin{pmatrix}  \cos\phi(3\cos^2\xi - \cos^2\phi) & 0 \\ 2\sin\phi(\sin^2\xi - \sin^2\phi) & \cos\phi(\cos^2\xi - \sin^2\phi) \end{pmatrix}
\]
with $\xi \in [0,\frac{\pi}{3}]$, $|\phi| \leq \xi$. Recall that the set ${\cal V}$ is overbounded by the set ${\cal V}'$ (see Fig.~\ref{fig:sets}). Both sets share the circle
\[ {\cal C} = \left\{ V = \begin{pmatrix} \cos\phi & 0 \\ 2\sin\phi & -\cos\phi \end{pmatrix} \mid \phi\in [-\pi,\pi] \right\}.
\]

\begin{figure}
\centering
\includegraphics[width=13.33cm,height=5.21cm]{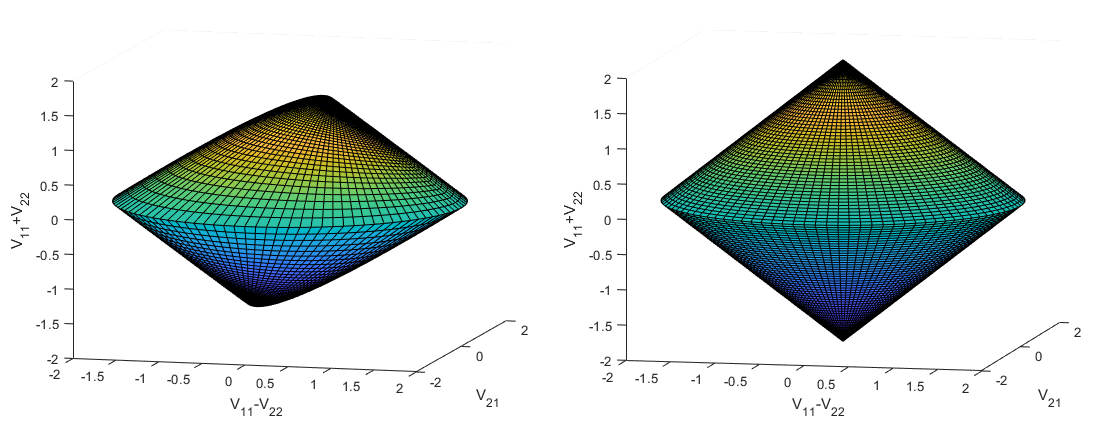}
\label{fig:sets}
\caption{True set ${\cal V}$ of controls $V$ (left) and overbounding set ${\cal V}'$ (right). The sharp circular edge of both bodies is the circle ${\cal C}$.}
\end{figure}

Note that the level sets of the Pontryagin function ${\cal H}$ are planes, because ${\cal H}$ is a fractional-linear function of $V$. Hence the maximum of ${\cal H}$ over a compact convex set is attained at an extreme point of this set. The maximum of ${\cal H}$ over the circle ${\cal C}$ can be computed explicitly and is given by the expression
\[ \max_{V \in {\cal C}}{\cal H} = \frac{p_1 + (p_1y_1 - p_2y_2)t + \sqrt{(p_1y_1 - p_2y_2 + p_1t)^2 + 4p_2^2y_1^2(1 - t^2)}}{1 - t^2}.
\]
Besides ${\cal C}$, the set ${\cal V}'$ has the extreme points $V = \pm I$, on which ${\cal H}$ evaluates to $\frac{p_1 \mp (p_1y_1+p_2y_2)}{1 \pm t}$. Hence we have $\max_{V \in {\cal V}'}{\cal H} = \max_{V \in {\cal C}}{\cal H}$ if
\[ \sqrt{(p_1y_1 - p_2y_2 + p_1t)^2 + 4p_2^2y_1^2(1 - t^2)} \geq -p_1t - p_1y_1 - p_2y_2 + 2p_2y_2t,
\]
\[ \sqrt{(p_1y_1 - p_2y_2 + p_1t)^2 + 4p_2^2y_1^2(1 - t^2)} \geq p_1t + p_1y_1 + p_2y_2 + 2p_2y_2t.
\]
These conditions then also imply $\max_{V \in {\cal V}}{\cal H} = \max_{V \in {\cal C}}{\cal H}$ and hence
\begin{equation} \label{Hamiltonian}
H(t,y,p) = \frac{p_1 + (p_1y_1 - p_2y_2)t + \sqrt{(p_1y_1 - p_2y_2 + p_1t)^2 + 4p_2^2y_1^2(1 - t^2)}}{1 - t^2}.
\end{equation}
It turns out a posteriori that the conditions are satisfied on the relevant solutions, and by virtue of the necessity of Pontryagins maximum principle for optimality we obtain the following result.

\begin{theorem} \label{thm:necessity}
Let $a,\gamma \in (0,1)$ be given. Then the upper bound on the Newton decrement $\rho_{k+1}$ after a damped Newton step with damping coefficient $\gamma$ and initial value of the decrement $\rho_k = a$ is given by the norm $||y(0)||$, where $(y(t),p(t))$, $y = (y_1,y_2)$, $p = (p_1,p_2)$, $t \in [-a\gamma,0]$, is a solution of boundary value problem \eqref{two_point_bdval} with Hamiltonian $H$ defined by \eqref{Hamiltonian}. \qed
\end{theorem}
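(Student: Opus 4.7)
The plan is to obtain Theorem \ref{thm:necessity} as an application of Pontryagin's maximum principle to the control system derived above, combined with the symmetry reduction and with the closed-form computation of the Hamiltonian. Before invoking PMP I first need an optimal control to exist: the control set $\mathcal{V}$ is compact and convex, the right-hand side of the state equation is continuous in $(t,y,V)$ and affine in $V$, and on the bounded interval $[-a\gamma,0]$ the trajectories remain in a compact region (because $tV_{11}+1$ stays bounded away from zero whenever $|t|<1$ and $|V_{11}|\leq 1$). A Filippov-type existence argument then furnishes an optimal measurable control together with an optimal trajectory maximizing $\|y(0)\|$.

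Next I carry out the dimensional reduction explained just before the theorem. The initial datum $y(-a\gamma)=-ae_1$, the drift direction $e_1$, and the cost $\|y(0)\|$ are all invariant under orthogonal transformations fixing $e_1$. Picking coordinates so that the two-dimensional span of $y(0)$ and $e_1$ coincides with the $(e_1,e_2)$-plane, the symmetry argument shows that along any optimal trajectory both $y$ and the adjoint $p$ stay in this plane; hence I may assume $n=2$ from the outset.

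I then apply PMP. The Pontryagin function $\mathcal H(t,y,p,V)=\langle p,-Vy+e_1\rangle/(tV_{11}+1)$ is fractional-linear in $V$, so its level sets are affine hyperplanes, and its maximum over the compact convex body $\mathcal V$ (equivalently, over its extreme points) is attained on the boundary. Since $\mathcal V\subset\mathcal V'$ and the two sets share the circle $\mathcal C$, maximizing over $\mathcal C$ by elementary calculus in $\phi$ gives the closed form (\ref{Hamiltonian}); I only have to rule out the extra extreme points $\pm I$ of $\mathcal V'$, which is exactly what the two scalar inequalities displayed before the theorem encode. Accepting for now that these inequalities hold along the relevant solutions (to be verified a posteriori once these solutions are analyzed in Sections \ref{sec:behaviour}--\ref{sec:damped}), I obtain $H(t,y,p)$ in the form (\ref{Hamiltonian}). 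The canonical equations $\dot y=\partial H/\partial p$, $\dot p=-\partial H/\partial y$ are then the state/adjoint ODEs of the optimal trajectory. The initial condition $y(-a\gamma)=-ae_1$ is imposed, while the free right end $t=0$ produces the transversality condition $p(0)=\nabla\|y(0)\|=y(0)/\|y(0)\|$, giving precisely the boundary value problem (\ref{two_point_bdval}).

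Because PMP is a necessary condition for optimality, the optimal trajectory $y^*$ is among the trajectories $y$ coming from solutions of (\ref{two_point_bdval}), and the supremum of $\rho_{k+1}$ therefore equals $\|y^*(0)\|$, which is the claimed upper bound. The main obstacle in this plan is the a~posteriori check that the two inequalities justifying the reduction of $\max_{V\in\mathcal V}\mathcal H$ to $\max_{V\in\mathcal C}\mathcal H$ are satisfied on the optimal extremals; this cannot be resolved at this stage and is deferred to the explicit study of the Hamiltonian flow in the subsequent sections.
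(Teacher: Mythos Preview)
Your proposal follows essentially the same route as the paper: reduce to $n=2$ by the rotational symmetry fixing $e_1$, apply Pontryagin's maximum principle to obtain the two-point boundary value problem \eqref{two_point_bdval}, compute the Hamiltonian by maximizing the fractional-linear Pontryagin function over the extreme points of $\mathcal V$ (in particular over the circle $\mathcal C$), and defer to later sections the a~posteriori verification that the inequalities excluding $V=\pm I$ hold along the relevant extremals. The paper itself treats the theorem as a summary of the preceding discussion and does not give a separate proof.

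One small inaccuracy: in your existence step you write that the right-hand side of the state equation is ``affine in $V$''. It is not; because of the denominator $tV_{11}+1$ it is fractional-linear in $V$. This does not damage the Filippov argument, since a fractional-linear map with strictly positive denominator sends convex sets to convex sets (via the perspective construction), so the velocity set $\{(-Vy+e_1)/(tV_{11}+1):V\in\mathcal V\}$ is still compact and convex for each $(t,y)$ with $|t|<1$. Also, your phrase ``free right end $t=0$'' is slightly misleading: the terminal time is fixed, it is the terminal \emph{state} that is free, and this is what yields the transversality condition $p(0)=y(0)/\|y(0)\|$. With these cosmetic corrections your argument matches the paper's.
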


In the next section we shall analyze the qualitative behaviour of the solutions of this Hamiltonian system.

\section{Qualitative behaviour of the solution curves} \label{sec:behaviour}

In this section we analyze the Hamiltonian system obtained in the previous section. It turns out that for small enough damping coefficients the trajectories corresponding to the 1-dimensional solution obtained in Section \ref{sec:1dim} are optimal. These trajectories are characterized by the condition that the variables $y_2,p_2$ are identically zero. If the initial point of the trajectory lies beyond a critical curve in the $(t,y_1)$-plane, however, all entries of $y,p$ participate in the dynamics. This section is devoted to the computation of the critical curve separating the two qualitatively different behaviours of the optimal trajectories.

As in the 1-dimensional case, the Bellman function $B(t,y)$ of the problem is constant on the trajectories of the Hamiltonian system and satisfies the boundary condition $B(0,y) = ||y||$. In order to construct it, we have to integrate the system in backward time with initial condition $p(0) = \frac{y(0)}{||y(0)||}$, launching a trajectory from every point of the $y$-plane.

In general, the projections on $y$-space of trajectories launched from different points may eventually intersect. In this case the trajectory with the maximal value of the Bellman function along it is retained. Therefore trajectories cease to be optimal at dispersion surfaces where they meet other trajectories with the same value of the Bellman function. In our case the plane $y_2 = 0$ acts as a dispersion surface by virtue of the symmetry $(y_1,y_2,p_1,p_2) \mapsto (y_1,-y_2,p_1,-p_2)$ of the system. Indeed, a trajectory launched from a point with $y_2(0) \not= 0$ and hitting the plane $y_2 = 0$ at some time will meet there with its image under the symmetry, which necessarily has the same value of the Bellman function. Thus the trajectories which are relevant for Theorem \ref{thm:necessity} are those which either completely evolve on the plane $y_2 = 0$, or which do not cross this plane in the time interval $(-a\gamma,0]$.

The first kind of trajectories correspond to the 1-dimensional system considered in Section \ref{sec:1dim} and are depicted in Fig.~\ref{fig:1synthesis}. In the regions between the dashed curves and the vertical axis they are given by
\begin{equation} \label{nominal}
y(t) = \frac{c + t}{1 - t}e_1,\quad p(t) = \frac{c(1 - t)}{|c|}e_1,
\end{equation}
where $c = y_1(0)$ is a parameter.

We now perturb trajectory \eqref{nominal} by launching it from a nearby point $y(0) = (c,\epsilon)$, and consider its evolution up to first order in $\epsilon$. This can be done by solving the linearization of ODE \eqref{two_point_bdval}. The coefficient matrix of the linearized system is given by
\[ \begin{pmatrix} \frac{\partial^2H}{\partial p\partial y} & \frac{\partial^2 H}{\partial p^2} \\ -\frac{\partial^2H}{\partial y^2} & -\frac{\partial^2H}{\partial y\partial p} \end{pmatrix} = \begin{pmatrix} \frac{1}{1 - t} & 0 & 0 & 0 \\ 0 & -\frac{1}{1 - t} & 0 & \frac{4y_1^2}{p_1(t + y_1)} \\ 0 & 0 & -\frac{1}{1 - t} & 0 \\ 0 & 0 & 0 & \frac{1}{1 - t} \end{pmatrix} = \begin{pmatrix} \frac{1}{1 - t} & 0 & 0 & 0 \\ 0 & -\frac{1}{1 - t} & 0 & \frac{4|c|(c + t)^2}{c(c + 2t - t^2)(1 - t)^2} \\ 0 & 0 & -\frac{1}{1 - t} & 0 \\ 0 & 0 & 0 & \frac{1}{1 - t} \end{pmatrix}.
\]
Here the first relation is obtained by setting $y_2 = p_2 = 0$ in the partial derivatives of $H$, the second one by inserting the values \eqref{nominal}. The linearized system has to be integrated with initial condition $\frac{\partial(c,\epsilon,\frac{c}{\sqrt{c^2+\epsilon^2}},\frac{\epsilon}{\sqrt{c^2+\epsilon^2}})}{\partial\epsilon}|_{\epsilon = 0} = (0,1,0,\frac{1}{|c|})$ at $t = 0$. It has the solution
\[ \delta y(t) = \delta y_2e_2,\qquad \delta p(t) = \frac{1}{|c|(1 - t)}e_2,
\]
where the scalar function $\delta y_2$ is a solution of the ODE
\[ \frac{d(\delta y_2)}{dt} = -\frac{1}{1 - t}\delta y_2 + \frac{4(c + t)^2}{c(c + 2t - t^2)(1 - t)^3}
\]
with initial condition $\delta y_2(0) = 1$. Integrating the ODE, we obtain
\begin{align*}
\delta y_2(t) &= \frac{-(c^2 + 5c + 16)(1-t)}{3c(c + 1)} + \frac{4(c + 2)}{c(c + 1)} - \frac{4}{c(1 - t)} + \frac{4(c + 1)}{3c(1 - t)^2} \\ & + \frac{4(1-t)\log\frac{c(1-t)^2}{c + 2t - t^2}}{c(c + 1)} + \frac{2(c + 2)(1-t)\log\frac{(c + 2t - t^2)(\sqrt{c+1} + 1)^2}{c(\sqrt{c+1} + 1 - t)^2}}{c(c + 1)^{3/2}}
\end{align*}
for $c > -1$, $\delta y_2 = \frac{4}{3(1-t)^2} - \frac{1-t}{3}$ for $c = -1$, and
\begin{align*}
\delta y_2(t) &= \frac{-(c^2 + 5c + 16)(1 - t)}{3c(c+1)} + \frac{4(c+2)}{c(c+1)} - \frac{4}{c(1-t)} + \frac{4(c+1)}{3c(1-t)^2} \\ & + \frac{4(1-t)\log\frac{c(1-t)^2}{c + 2t - t^2}}{c(c+1)} + \frac{4(c+2)(1-t)\left( \arctan\frac{1}{\sqrt{-1-c}} - \arctan\frac{1-t}{\sqrt{-1-c}} \right)}{c(-1-c)^{3/2}}
\end{align*}
for $c < -1$.

Setting the variable $\delta y_2$ to zero, we obtain a critical value of $t$ (a \emph{focal point}) on trajectory \eqref{nominal} beyond which other nearby trajectories of the system start to intersect the $y_2 = 0$ plane. At this point trajectory \eqref{nominal} ceases to be optimal. The ensemble of these critical points for all values $c \in \mathbb R$ forms a curve in the $(t,y_1)$-plane which marks the limit of optimality of the synthesis obtained in Section \ref{sec:1dim}. In order to obtain an expression for this curve, we have to use \eqref{nominal} to express $c$ as a function of $t,y_1$. Inserting this expression into the relation $\delta y_2 = 0$ yields a relation between $t$ and $y_1$.

For $c > -1$ this relation is given by
\[
(- Y^4T^6 + 4Y^4 - 3Y^2T^4 - 12y_1t)Y + 24T^2Y\log T + 6T(YT - 1)^2\log\frac{Y - T}{YT - 1} + 6T(YT + 1)^2\log\frac{YT + 1}{Y + T} = 0,
\]
where $Y = \sqrt{1+y_1}$, $T = \sqrt{1-t}$. For $c = -1$ we obtain the point $(t^*,-1)$ with $t^* = 1 - 2^{2/3} \approx -0.5874$, and for $c < -1$ we get
\[ (- Y^4T^3 + 4Y^4 + 3Y^2T^3 - 12T^2y_1t) + 12T^3 \left( 2\log T + \frac{(Y^2 - 1)(\arctan\frac{1}{Y} - \arctan\frac{T}{Y})}{Y} + \log\frac{Y^2 + 1}{Y^2 + T^2} \right) = 0,
\]
where $Y = \sqrt{-(1+y_1)(1-t)}$, $T = 1-t$.

In particular, both the switching curve and the dispersion curve of the 1-dimensional optimal synthesis lie beyond the critical curve and are not part of the full-dimensional solution (see Fig.~\ref{fig:critical}, left).

\begin{figure}
\centering
\includegraphics[width=13.07cm,height=5.09cm]{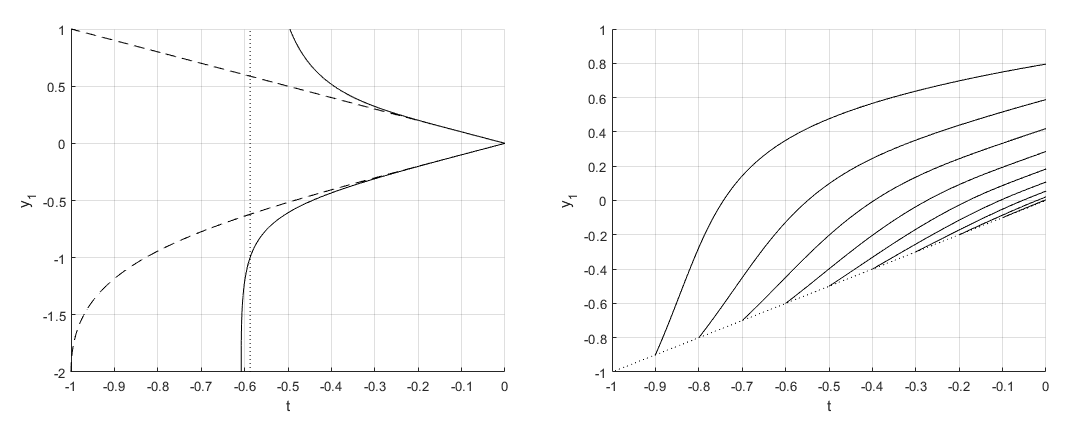}
\label{fig:critical}
\caption{Left: Critical curve marking the limit of optimality of the 1-dimensional solution (solid). For comparison the switching curve and the dispersion curve of the 1-dimensional optimal synthesis are also depicted (dashed). For $|y| \to +\infty$ the critical curve tends to the line $t = 1 - 2^{2/3}$ (dotted). Right: Projections on the $(t,y_1)$-plane of the trajectories corresponding to a full Newton step for different initial values $a$ of the Newton decrement. The dotted line is the locus of the initial points.}
\end{figure}

We obtain the following result.

\begin{theorem} \label{thm:critical}
Let $a,\gamma \in (0,1)$ be given, and suppose that the point $(t,y_1) = (-a\gamma,-a)$ lies to the right of the critical curve defined above. Then the upper bound on the Newton decrement $\rho_{k+1}$ after a damped Newton step with damping coefficient $\gamma$ and initial value of the decrement $\rho_k = a$ is given by $a-a\gamma+a^2\gamma$.
\end{theorem}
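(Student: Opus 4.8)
The strategy is to recognise $a - a\gamma + a^2\gamma$ as the value at the initial point $(t,y_1) = (-a\gamma,-a)$ of the simplest branch of the one-dimensional Bellman function computed in Section~\ref{sec:1dim}, and then to use the focal-point analysis of the present section to certify that this value is also the full-dimensional optimum, hence, by Theorem~\ref{thm:necessity}, the sought upper bound on $\rho_{k+1}$. Two things must be established: (i) that a point lying to the right of the critical curve falls in the region where the one-dimensional Bellman function equals $-y + t + ty$; and (ii) that to the right of the critical curve the full-dimensional Bellman function, restricted to $y_2 = 0$, agrees with the one-dimensional one.

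For (i) I would locate $(-a\gamma,-a)$ inside the optimal synthesis of Fig.~\ref{fig:1synthesis}. Since $a,\gamma \in (0,1)$ we have $-a < 0 < a\gamma = -t$, so the point always lies strictly below the switching curve $y = -t$ and never belongs to the branch $B = y - t - ty$. The dispersion curve $y = \frac{2(-1+\sqrt{1+t^3})}{t^2}$ is the monotone graph joining $(0,0)$ to $(-1,-2)$ — monotonicity in $t$ is visible in Fig.~\ref{fig:1synthesis} and follows by differentiation — so the region lying to its right is exactly $\{ y \le \frac{2(-1+\sqrt{1+t^3})}{t^2} \}$, i.e.\ the branch on which $B(t,y) = -y + t + ty$. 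By the last assertion of this section the dispersion curve lies beyond (to the left of) the critical curve, so a point to the right of the critical curve is a fortiori to the right of the dispersion curve, and hence $B(-a\gamma,-a) = a - a\gamma + a^2\gamma$. Note that this branch corresponds to the constant control $u \equiv +1$, along which \eqref{nominal} gives $p = -(1-t)e_1$ and $y_2 = p_2 = 0$; substituting these into the two inequalities preceding \eqref{Hamiltonian} verifies a posteriori that \eqref{Hamiltonian} is indeed the Hamiltonian governing the dynamics on this piece.

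For (ii) I would argue as sketched in the body of this section. By Theorem~\ref{thm:necessity} the upper bound equals the value at $(-a\gamma,-a)$ of the full-dimensional Bellman function, which is constant along trajectories of the Hamiltonian system and equals $||y||$ at $t = 0$. The symmetry $(y_1,y_2,p_1,p_2)\mapsto(y_1,-y_2,p_1,-p_2)$ makes the plane $y_2 = 0$ a dispersion surface: a trajectory crossing it meets its mirror image there with the same Bellman value, so such trajectories are not relevant for the optimum on the interval $(-a\gamma,0]$. For an initial point to the right of the critical curve the focal-point computation shows that $\delta y_2$ does not vanish on $(-a\gamma,0]$, i.e.\ \eqref{nominal} has no conjugate point there and nearby trajectories do not reach $y_2 = 0$ within that interval; hence the optimal backward trajectory through $(-a\gamma,-a)$ is \eqref{nominal} itself, which stays on $y_2 = 0$, and the full-dimensional Bellman function agrees with the one-dimensional one on this region. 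Tightness of the bound is witnessed by the one-dimensional extremal function realising $u \equiv +1$, already exhibited in Section~\ref{sec:1dim}. Combining (i) and (ii) gives $\rho_{k+1} \le a - a\gamma + a^2\gamma$, with equality attained.

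The step I expect to be the main obstacle is the global optimality claim in (ii): the focal-point analysis only excludes infinitesimal competitors of \eqref{nominal}, so one still has to rule out ``distant'' competing trajectories. The cleanest way to close this gap is a verification-theorem argument: exhibit a candidate Bellman function which equals $-y + t + ty$ on the region in question (suitably extended for $y_2 \neq 0$), check that it satisfies the Hamilton--Jacobi inequality $\max_{V\in{\cal V}}\frac{dB}{dt} \le 0$ there together with $B(0,y) = ||y||$, and invoke the sufficiency direction of dynamic programming to obtain $\rho_{k+1} \le B(-a\gamma,-a)$ directly, without tracking individual trajectories.
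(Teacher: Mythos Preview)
Your approach is essentially the paper's own: the paper's proof is a single sentence stating that the analysis of Section~\ref{sec:1dim} applies, so $B(-a\gamma,-a)=-y_1+t+ty_1=a-a\gamma+a^2\gamma$; you simply unpack the two ingredients (i)~locating the initial point on the correct branch of the one-dimensional Bellman function and (ii)~invoking the focal-point discussion to justify that the one-dimensional synthesis is optimal to the right of the critical curve. One small slip: on the branch $B=-y+t+ty$ the optimal control is $u\equiv -1$ (the trajectory $y=\frac{c+t}{1-t}$ solves $(1+y)(1-t)=\text{const}$), not $u\equiv +1$; this does not affect your verification of the inequalities preceding \eqref{Hamiltonian}, which hold trivially once $y_2=p_2=0$. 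Your caveat about global versus local optimality is fair and applies equally to the paper, which also relies on the linearised (conjugate-point) analysis and the dispersion-surface argument without a separate verification step.
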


\begin{proof}
Since in this case the analysis in Section \ref{sec:1dim} is applicable, the optimal value of $\rho_{k+1}$ is given by $B(t,y_1) = -y_1+t+ty_1$.
\end{proof}

The trajectories with initial condition $y(-a\gamma) = -ae_1$ corresponding to a point $(t,y_1) = (-a\gamma,-a)$ beyond the critical curve can only be computed numerically by solving \eqref{two_point_bdval}. To each such initial condition there correspond two solutions which are taken to each other by the symmetry $(y_1,y_2,p_1,p_2) \mapsto (y_1,-y_2,p_1,-p_2)$. In particular, this will be the case both for the full Newton step and the optimal step length.

\section{Optimal step length and bounds for the damped Newton step} \label{sec:damped}

In this section we minimize the upper bound on $\rho_{k+1}$ with respect to the damping coefficient $\gamma$ for fixed initial values of the decrement $\rho_k = a$. The minimizer of this problem yields the optimal damping coefficient for the Newton iterate which leads to the largest guaranteed decrease of the decrement.

Technically, releasing $\gamma$ is equivalent to releasing the left end of the time interval on which the trajectory of the Hamiltonian system evolves, while leaving the initial state fixed. It is well known that the partial derivative with respect to time of the Bellman function, i.e., the objective achieved by the trajectory, equals the value of the Hamiltonian \cite{PBGM62}. Therefore we look for trajectories with starting points lying on the surface $H = 0$.

Let us first evaluate $H$ on the critical curve obtained in the previous section, more precisely on its arc between the lines $y_1 = -1$ and $y_1 = 0$. There we have $p_1 < 0$, $y_1 + t < 0$. Setting $y_2 = 0$, $p_2 = 0$, we obtain $H = \frac{p_1(1 + y_1)}{1 - t}$. Thus for $a < 1$ we have $H < 0$ and hence the optimal initial time instant $t$ is strictly smaller than the time instant defined by the critical curve. For $a = 1$, or equivalently $y_1 = -1$, the trajectory of the 1-dimensional system is optimal. As a consequence, for $a \to 1$ the optimal damping coefficient tends to $2^{2/3} - 1 \approx 0.5874$.

\begin{lemma}
The hyper-surface $H = 0$ is integral for the Hamiltonian system defined by \eqref{Hamiltonian}.
\end{lemma}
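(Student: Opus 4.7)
For any time-dependent Hamiltonian system the total derivative of the Hamiltonian along a trajectory satisfies
\[
\frac{d}{dt}H(t,y(t),p(t)) = \frac{\partial H}{\partial t} + \frac{\partial H}{\partial y}\cdot\dot y + \frac{\partial H}{\partial p}\cdot\dot p = \frac{\partial H}{\partial t},
\]
since the Hamilton equations $\dot y = \partial H/\partial p$, $\dot p = -\partial H/\partial y$ cancel the Poisson-bracket part. Consequently, to show that the hypersurface $\{H=0\}$ is invariant under the flow it suffices to verify that $\partial H/\partial t$ vanishes identically on this hypersurface. This is how I would proceed.

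To carry this out I would rewrite the Hamiltonian \eqref{Hamiltonian} as $H = N/D$, where $D = 1 - t^2$ and $N = p_1 + \alpha t + R$, with the shorthands $\alpha = p_1 y_1 - p_2 y_2$ and $R = \sqrt{(\alpha + p_1 t)^2 + 4p_2^2 y_1^2(1-t^2)}$. Because $N = 0$ on $\{H=0\}$ and $D\neq 0$ for $|t|<1$, one has $\partial H/\partial t = D^{-1}\,\partial N/\partial t$ there, so everything reduces to showing that $\partial N/\partial t$ vanishes on $\{H=0\}$. A direct differentiation yields
\[
\frac{\partial N}{\partial t} = \alpha + \frac{(\alpha + p_1 t)\,p_1 - 4p_2^2 y_1^2\, t}{R}.
\]

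The key algebraic step is to exploit the consequence of $H=0$. From $N = 0$ we have $R = -(p_1 + \alpha t)$ (which in particular forces $p_1 + \alpha t \le 0$); squaring and using the difference of squares identity
\[
(p_1 + \alpha t)^2 - (\alpha + p_1 t)^2 = (p_1 - \alpha)(p_1 + \alpha)(1-t)(1+t) = (p_1^2 - \alpha^2)(1 - t^2)
\]
gives, after cancelling the common factor $1 - t^2$, the relation $p_1^2 - \alpha^2 = 4 p_2^2 y_1^2$. Substituting this into the numerator of the fractional term in $\partial N/\partial t$ produces
\[
(\alpha + p_1 t)\,p_1 - 4p_2^2 y_1^2\, t = \alpha p_1 + p_1^2 t - (p_1^2 - \alpha^2)\,t = \alpha(p_1 + \alpha t),
\]
so that $\partial N/\partial t = \alpha + \alpha(p_1 + \alpha t)/R = \alpha - \alpha = 0$ on $\{H = 0\}$. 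Hence $\partial H/\partial t$ vanishes there, and by the opening observation $H$ is conserved along any trajectory meeting the set $\{H = 0\}$, which is exactly the claim of integrality.

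The only potential obstacle is the algebraic manipulation, and even that is light: the identity $p_1^2 - \alpha^2 = 4 p_2^2 y_1^2$ on $\{H=0\}$ is the one nontrivial input, after which the cancellation $\alpha - \alpha = 0$ is automatic. No genuine dynamical argument beyond the standard fact $dH/dt = \partial H/\partial t$ along Hamiltonian trajectories is required.
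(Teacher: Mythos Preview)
Your argument is correct and follows essentially the same route as the paper: both use the identity $\dot H = \partial H/\partial t$ for Hamiltonian flows and then check that this partial derivative vanishes on $\{H=0\}$. The paper does this by exhibiting the global factorization $\partial H/\partial t = H\cdot\dfrac{\alpha + p_1 t + tR}{R(1-t^2)}$ (in your notation), whereas you restrict to $\{H=0\}$ first and obtain the relation $p_1^2-\alpha^2 = 4p_2^2y_1^2$ before carrying out the cancellation; the two computations are algebraically equivalent.
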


\begin{proof}
One easily computes
\[ \dot H = \frac{\partial H}{\partial t} = H \cdot \frac{p_1y_1 - p_2y_2 + p_1t + t\sqrt{(p_1y_1 - p_2y_2 + p_1t)^2 + 4p_2^2y_1^2(1 - t^2)}}{\sqrt{(p_1y_1 - p_2y_2 + p_1t)^2 + 4p_2^2y_1^2(1 - t^2)}(1 - t^2)},
\]
and hence if $H = 0$ somewhere on a trajectory, then $H \equiv 0$ everywhere on the trajectory.
\end{proof}

In particular, it follows that at the end-point $t = 0$ of the trajectory we also have $H = 0$. From \eqref{Hamiltonian} we then obtain by virtue of the transversality conditions $p(0) = \frac{y(0)}{||y(0)||}$ that
\[ H(0) = p_1 + \sqrt{(p_1y_1 - p_2y_2)^2 + 4p_2^2y_1^2} = \frac{y_1 + y_1^2 + y_2^2}{||y(0)||} = 0.
\]
The locus of the end-points in $y$-space is hence given by the circle $(y_1 + \frac12)^2 + y_2^2 = \frac14$.

From now on we assume without loss of generality that $y_2 \geq 0$ on the trajectory of the system.

Using the homogeneity of the dynamics with respect to the adjoint variable $p$ we may eliminate this variable altogether. On the surface $H = 0$ we have
\[ (y_1^2 - 1)p_1^2 - 2y_1y_2p_1p_2 + (4y_1^2 + y_2^2)p_2^2 = 0, \qquad \frac{p_1}{p_2} = \frac{y_1y_2 + \sqrt{- 4y_1^4 + 4y_1^2 + y_2^2}}{y_1^2 - 1},
\]
and consequently
\begin{align*}
\dot y_1 &= \frac{- p_1y_1^2 + p_2y_2y_1 + p_1}{p_1 + p_1y_1t - p_2y_2t} = \frac{\sqrt{- 4y_1^4 + 4y_1^2 + y_2^2}(-y_2(y_1 + t) + (y_1t + 1)\sqrt{- 4y_1^4 + 4y_1^2 + y_2^2})}{4y_1^4t^2 + 8y_1^3t + 4y_1^2 - y_2^2t^2 + y_2^2}, \\
\dot y_2 &= -\frac{4p_2y_1^2 - p_1y_1y_2 + p_2y_2^2}{p_1 + p_1y_1t - p_2y_2t} = \frac{\sqrt{- 4y_1^4 + 4y_1^2 + y_2^2}(4t y_1^3 + 4y_1^2 + y_2^2 - t y_2\sqrt{- 4y_1^4 + 4y_1^2 + y_2^2})}{4y_1^4t^2 + 8y_1^3t + 4y_1^2 - y_2^2t^2 + y_2^2}.
\end{align*}
A closer look reveals that the quotient of the two derivatives does not depend on $t$, and we obtain a planar dynamical system defined by the scalar ODE
\begin{equation} \label{planarODE}
\frac{dy_2}{dy_1} = \frac{\sqrt{4y_1^2(1 - y_1^2) + y_2^2} + y_1y_2}{1 - y_1^2}.
\end{equation}
We obtain the following result.

\begin{theorem}
Let $a \in (0,1)$ be given, and let $\sigma$ be the solution curve of ODE \eqref{planarODE} through the point $y_0 = (-a,0)$. Then the upper bound on the Newton decrement $\rho_{k+1}$ after a damped Newton step with optimal damping coefficient and initial value $\rho_k = a$ of the decrement is given by $||y^*|| = \sqrt{-y_1^*}$, where $y^* = (y_1^*,y_2^*)$ is the intersection point of the curve $\sigma$ with the circle centered on $(-\frac12,0)$ and with radius $\frac12$ in the upper half-plane $y_2 > 0$. \qed
\end{theorem}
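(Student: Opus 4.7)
My plan is to assemble the theorem from three pieces already established in the paper: the necessity result of Theorem \ref{thm:necessity}, the transversality/free-endpoint condition that forces $H\equiv 0$, and the reduction of the Hamiltonian dynamics on $\{H=0\}$ to the planar ODE \eqref{planarODE}.

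First I would note that by Theorem \ref{thm:necessity} the worst-case value of $\rho_{k+1}$ for \emph{fixed} $\gamma$ equals $\|y(0)\|$, where $(y,p)$ solves the boundary value problem \eqref{two_point_bdval} on $[-a\gamma,0]$. Optimizing over $\gamma$ with fixed initial state $y(-a\gamma)=-ae_1$ is equivalent to releasing the left endpoint of the time interval; the standard free-time transversality condition (the Bellman function satisfies $\partial B/\partial t = H$) then yields $H=0$ at the initial time. By the preceding lemma the hyper-surface $\{H=0\}$ is integral, so $H\equiv 0$ along the whole optimal trajectory. Evaluating $H$ at $t=0$ and using the transversality $p(0)=y(0)/\|y(0)\|$ turns \eqref{Hamiltonian} into
\[
0 = \frac{y_1(0)+y_1(0)^2+y_2(0)^2}{\|y(0)\|},
\]
so $y(0)$ lies on the circle $(y_1+\tfrac12)^2+y_2^2=\tfrac14$ and $\|y(0)\|=\sqrt{-y_1(0)}$.

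Next I would reduce to the planar ODE. On $\{H=0\}$ the relation from \eqref{Hamiltonian} gives a quadratic for the ratio $p_1/p_2$ whose coefficients depend only on $y_1,y_2$ (not on $t$); selecting the branch consistent with $y_2\ge 0$ (which is WLOG by the symmetry $(y_1,y_2,p_1,p_2)\mapsto(y_1,-y_2,p_1,-p_2)$) and substituting into the Hamiltonian equations for $\dot y_1,\dot y_2$ gives the expressions computed just before \eqref{planarODE}. Their quotient is independent of $t$ and reduces to the scalar ODE \eqref{planarODE}, so the projection of the optimal trajectory to $y$-space is exactly an integral curve of \eqref{planarODE}. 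The initial condition $y(-a\gamma)=-ae_1$ fixes this curve to be $\sigma$, the solution through $y_0=(-a,0)$.

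Finally I would verify that $\sigma$ genuinely reaches the end-point circle in the upper half-plane. At $y_0=(-a,0)$ the right-hand side of \eqref{planarODE} equals $2a/\sqrt{1-a^2}>0$, so $\sigma$ leaves $y_0$ into $\{y_2>0\}$; the end-point $y(0)$ lies on the circle in $\{y_2\ge 0\}$ by the first paragraph; and it cannot be $y_2(0)=0$ because then the trajectory would be the one-dimensional solution of Section~\ref{sec:1dim}, which was ruled out by the discussion at the start of this section (the critical-curve analysis showed that for $a<1$ the one-dimensional trajectory yields $H<0$, not $H=0$). Therefore $\sigma$ meets the circle at a point $y^*=(y_1^*,y_2^*)$ with $y_2^*>0$, and by the identity above $\|y^*\|=\sqrt{-y_1^*}$, which is the claimed bound.

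The one step that requires the most care is the free-endpoint transversality yielding $H\equiv 0$: it has to be combined with the invariance lemma and the previously derived fact that the one-dimensional (i.e.\ $y_2\equiv p_2\equiv 0$) trajectory is not optimal for $a<1$, so that the relevant solution is genuinely two-dimensional and its projection is governed by \eqref{planarODE}. Existence/uniqueness of the intersection of $\sigma$ with the circle in $\{y_2>0\}$ is then a short monotonicity check rather than a substantive obstacle.
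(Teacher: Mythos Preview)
Your proposal is correct and follows essentially the same route as the paper: the theorem is stated with a terminal \qed\ because its proof is the discussion immediately preceding it, and you have faithfully reassembled that discussion—free-endpoint condition $H=0$, the invariance lemma giving $H\equiv 0$, the endpoint circle from $H(0)=0$ plus transversality, and the reduction to \eqref{planarODE}. Your added checks (the sign of $dy_2/dy_1$ at $y_0$ and the exclusion of the one-dimensional trajectory via the $H<0$ computation on the critical curve) make explicit what the paper leaves implicit, but do not change the argument.
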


The Riemann surfaces corresponding to the solutions of ODE \eqref{planarODE} in the complex plane have an infinite number of quadratic ramification points, and the equation is not integrable in closed form. The optimal bound on $\rho_{k+1}$ as a function of $\rho_k$ can be computed numerically and is depicted in Fig.~\ref{fig:bounds}, left.

In order to obtain the value of the optimal damping coefficient one also has to integrate the linear differential equation
\begin{equation} \label{damping_eq}
\begin{aligned}
\frac{dt}{dy_1} &= \frac{4y_1^4t^2 + 8y_1^3t + 4y_1^2 - y_2^2t^2 + y_2^2}{\sqrt{- 4y_1^4 + 4y_1^2 + y_2^2}(-y_2(y_1 + t) + (y_1t + 1)\sqrt{- 4y_1^4 + 4y_1^2 + y_2^2})} \\ &= \frac{y_2(y_1 + t) + (y_1t + 1)\sqrt{- 4y_1^4 + 4y_1^2 + y_2^2}}{(1 - y_1^2)\sqrt{- 4y_1^4 + 4y_1^2 + y_2^2}}.
\end{aligned}
\end{equation}

\begin{theorem}
Let $a \in (0,1)$ be given, and let $\sigma$ be the solution curve of ODE \eqref{planarODE} through the point $y_0 = (-a,0)$, intersecting the circle $(y_1+\frac12)^2 + y_2^2 = \frac14$ in the point $y^*$ in the upper half-plane $y_2 > 0$. Then the optimal damping coefficient $\gamma$ for the Newton step with initial value $\rho_k = a$ of the decrement is given by the value of $t(y_0)$, where $t(y)$ is the solution of ODE \eqref{damping_eq} along the curve $\sigma$ with initial value $t(y^*) = 0$. \qed
\end{theorem}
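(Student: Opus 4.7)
The plan is to combine the ingredients already assembled in Sections~\ref{sec:2dim}--\ref{sec:damped}. For fixed $a\in(0,1)$, view the upper bound on $\rho_{k+1}$ as the function $B^*(\gamma)=B(-a\gamma,-ae_1)$ of the damping coefficient, where $B$ is the Bellman function of the control problem of Theorem~\ref{thm:necessity}. A standard fact from optimal control identifies the partial derivative of the Bellman function with respect to the initial time with the value of the Hamiltonian along the optimal trajectory issued from that starting point. Since varying $\gamma$ only moves the left end of the time interval while keeping the initial state fixed, this gives $\frac{dB^*}{d\gamma}=-a\cdot H$ at the starting point. Hence at an interior minimizer the associated trajectory satisfies $H=0$ at $t=-a\gamma$. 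That the minimizer is interior for $a\in(0,1)$ follows from the observation made at the beginning of the section that $H<0$ on the relevant arc of the critical curve, together with the trivial remark that $\gamma=0$ does not decrease the decrement.

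By the preceding lemma, $H=0$ is preserved along the entire trajectory. Substituting the transversality condition $p(0)=y(0)/\|y(0)\|$ into \eqref{Hamiltonian} at $t=0$ gives, as already computed, $\|y(0)\|\,H(0)=y_1(0)+y_1(0)^2+y_2(0)^2=0$, so the end-point lies on the circle $(y_1+\tfrac12)^2+y_2^2=\tfrac14$. At the other end the boundary condition of \eqref{two_point_bdval} fixes $y(-a\gamma)=y_0=(-a,0)$.

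Next I would invoke the reduction carried out after the lemma: on the hyper-surface $\{H=0\}$, homogeneity of the Hamiltonian system in $p$ together with the quadratic relation $H=0$ allow the elimination of $p$, yielding the autonomous planar ODE \eqref{planarODE} for the $y$-projection of the trajectory, plus the companion linear ODE \eqref{damping_eq} governing the time coordinate along this projection. Fixing the branch of the square root by the convention $y_2\geq 0$ and applying uniqueness for \eqref{planarODE} with initial datum $y_0$, the $y$-projection of the optimal trajectory must coincide with the curve $\sigma$ of the statement, and the endpoint $y^\ast$ is the first crossing of $\sigma$ with the circle in the upper half-plane. Integrating \eqref{damping_eq} along $\sigma$ backward from the boundary condition $t(y^\ast)=0$ recovers $t(y_0)=-a\gamma$, which determines the optimal damping coefficient as claimed.

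The main obstacle is technical rather than conceptual. First, one has to check that the candidate trajectory remains in the regime where the explicit Hamiltonian \eqref{Hamiltonian} is indeed the maximum of the Pontryagin function over the true control set $\mathcal V$, i.e., that the two inequalities displayed just before \eqref{Hamiltonian} hold along $\sigma$; as remarked after Theorem~\ref{thm:necessity}, this is verified a posteriori on the numerically computed solution. Second, one must ensure that the radicand $4y_1^2(1-y_1^2)+y_2^2$ stays non-negative and that $1-y_1^2$ stays non-zero along the arc of $\sigma$ between $y_0$ and $y^\ast$, so that \eqref{planarODE} and \eqref{damping_eq} are well-posed there; this holds because both $y_0$ and the entire endpoint circle lie strictly inside the strip $|y_1|<1$, and the $y$-projection of the Hamiltonian trajectory does not leave this strip before reaching the endpoint.
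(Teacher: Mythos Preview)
Your proposal is correct and follows precisely the line of argument the paper assembles in Section~\ref{sec:damped}; the paper itself gives no self-contained proof for this theorem (note the \qed\ in the statement), relying instead on the preceding discussion, which you have faithfully reconstructed. One minor point: with the HJB convention used earlier in the paper one actually gets $\frac{dB^*}{d\gamma}=aH$ rather than $-aH$, but this does not affect the first-order condition $H=0$; also, your formula $t(y_0)=-a\gamma$ is the correct relation (the theorem's phrasing ``$\gamma$ is given by $t(y_0)$'' is a slight abuse).
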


In order to compute the optimal value of $\gamma$ one hence has first to integrate ODE \eqref{planarODE} from $y_0$ to $y^*$ and then ODE \eqref{damping_eq} back from $y^*$ to $y_0$. The result is depicted on Fig.~\ref{fig:bounds}, right. The solution curves of the ODEs are depicted in Fig.~\ref{fig:ODEs}.

\begin{figure}
\centering
\includegraphics[width=13.55cm,height=5.09cm]{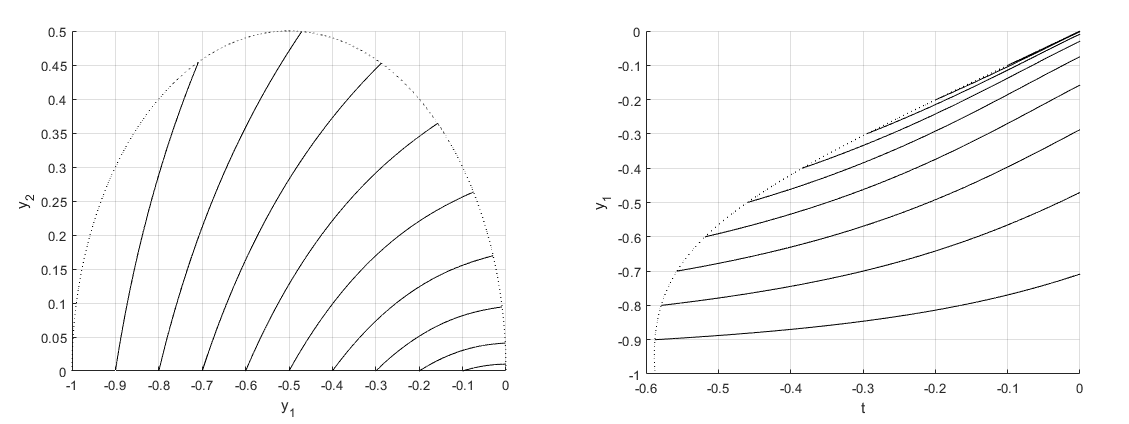}
\label{fig:ODEs}
\caption{Left: Solution curves of ODE \eqref{planarODE} between the line $y_2 = 0$ and the circle $(y_1+\frac12)^2 + y_2^2 = \frac14$ in the upper half-plane. Right: Solutions of ODE \eqref{damping_eq} on the curves depicted on the left part of the figure. The dotted curves are the locus of the end-points of the trajectories.}
\end{figure}

\section{Approximate values of the bound and the optimal step length} \label{sec:numerical}

As was established above, the worst-case Newton decrement after a Newton step with a given step length can only be computed by solving the two-point boundary value problem \eqref{two_point_bdval}. The optimal step length and the corresponding bound on the decrement can be computed by solving ODEs \eqref{planarODE} and \eqref{damping_eq}. In Table \ref{table:numerical} we provide numerical values for the upper bound $\underline{\lambda}$ of the decrement for the full Newton step and the optimal Newton step as a function of the decrement $\overline{\lambda}$ at the current point, and values for the optimal step length $\gamma^*$.

\begin{table}
\centering
{\footnotesize
\begin{tabular}{c|c|c|c}
$\overline{\lambda}$ & $\underline{\lambda}$, $\gamma = 1$ & $\underline{\lambda}$, $\gamma = \gamma^*$ & $\gamma^*$ \\
\hline
$0.02$ & $0.0004002129$ & $0.0004002020$ & $0.9999959049$ \\
$0.04$ & $0.0016030061$ & $0.0016027905$ & $0.9999672844$ \\
$0.05$ & $0.0025070365$ & $0.0025064667$ & $0.9999357402$ \\
$0.06$ & $0.0036140910$ & $0.0036128237$ & $0.9998883080$ \\
$0.08$ & $0.0064421367$ & $0.0064376124$ & $0.9997320984$ \\
$0.10$ & $0.0100985841$ & $0.0100863025$ & $0.9994703910$ \\
$0.12$ & $0.0145975978$ & $0.0145695698$ & $0.9990735080$ \\
$0.14$ & $0.0199560957$ & $0.0198993569$ & $0.9985103062$ \\
$0.15$ & $0.0229637111$ & $0.0228857123$ & $0.9981561975$ \\
$0.16$ & $0.0261938350$ & $0.0260886230$ & $0.9977481582$ \\
$0.18$ & $0.0333335449$ & $0.0331510886$ & $0.9967529721$ \\
$0.20$ & $0.0414011017$ & $0.0411009637$ & $0.9954892620$ \\
$0.22$ & $0.0504257486$ & $0.0499526517$ & $0.9939202785$ \\
$0.24$ & $0.0604403592$ & $0.0597204227$ & $0.9920082112$ \\
$0.25$ & $0.0658302428$ & $0.0649521741$ & $0.9909114838$ \\
$0.26$ & $0.0714817517$ & $0.0704180523$ & $0.9897144751$ \\
$0.28$ & $0.0835910576$ & $0.0820584243$ & $0.9870000911$ \\
$0.30$ & $0.0968141551$ & $0.0946530992$ & $0.9838261704$ \\
$0.32$ & $0.1112021742$ & $0.1082118504$ & $0.9801545078$ \\
$0.34$ & $0.1268120901$ & $0.1227421781$ & $0.9759482831$ \\
$0.35$ & $0.1350948306$ & $0.1303732829$ & $0.9736337787$ \\
$0.36$ & $0.1437074168$ & $0.1382488115$ & $0.9711728663$ \\
$0.38$ & $0.1619590244$ & $0.1547332145$ & $0.9657967079$ \\
$0.40$ & $0.1816461018$ & $0.1721931153$ & $0.9597922905$ \\
$0.42$ & $0.2028572983$ & $0.1906220829$ & $0.9531371033$ \\
$0.44$ & $0.2256920826$ & $0.2100091752$ & $0.9458145937$ \\
$0.45$ & $0.2377527563$ & $0.2200572997$ & $0.9418997667$ \\
$0.46$ & $0.2502623689$ & $0.2303386829$ & $0.9378150396$ \\
$0.48$ & $0.2766944756$ & $0.2515899946$ & $0.9291362838$ \\
$0.50$ & $0.3051314992$ & $0.2737375986$ & $0.9197842716$ \\
$0.52$ & $0.3357362119$ & $0.2967512350$ & $0.9097733400$ \\
$0.54$ & $0.3686946267$ & $0.3205961990$ & $0.8991262221$ \\
$0.55$ & $0.3861220234$ & $0.3328184758$ & $0.8935734256$ \\
$0.56$ & $0.4042204205$ & $0.3452337887$ & $0.8878737471$ \\
$0.58$ & $0.4425604721$ & $0.3706218774$ & $0.8760542409$ \\
$0.60$ & $0.4840018685$ & $0.3967155859$ & $0.8637126540$ \\
$0.62$ & $0.5288808639$ & $0.4234680202$ & $0.8508994659$ \\
$0.64$ & $0.5775944788$ & $0.4508310379$ & $0.8376694301$ \\
$0.65$ & $0.6035336028$ & $0.4647263175$ & $0.8309160437$ \\
$0.66$ & $0.6306157177$ & $0.4787560083$ & $0.8240802319$ \\
$0.68$ & $0.6885138337$ & $0.5071945318$ & $0.8101911337$ \\
$0.70$ & $0.7519817648$ & $0.5360990922$ & $0.7960616771$ \\
$0.72$ & $0.8218739745$ & $0.5654236219$ & $0.7817504964$ \\
$0.74$ & $0.8992597480$ & $0.5951239679$ & $0.7673142876$ \\
$0.75$ & $0.9411738550$ & $0.6101018892$ & $0.7600662717$ \\
$0.76$ & $0.9855000696$ & $0.6251582547$ & $0.7528069572$ \\
$0.78$ & $1.0823615911$ & $0.6554871449$ & $0.7382789623$ \\
$0.80$ & $1.1921910478$ & $0.6860740057$ & $0.7237768386$ \\
$0.82$ & $1.3181923462$ & $0.7168849922$ & $0.7093429029$ \\
$0.84$ & $1.4648868491$ & $0.7478890580$ & $0.6950151115$ \\
$0.85$ & $1.5479540249$ & $0.7634545483$ & $0.6879016840$ \\
$0.86$ & $1.6389206049$ & $0.7790579083$ & $0.6808270495$ \\
$0.88$ & $1.8505789254$ & $0.8103659079$ & $0.6668080264$ \\
$0.90$ & $2.1168852322$ & $0.8417899549$ & $0.6529832527$ \\
$0.92$ & $2.4687193058$ & $0.8733093323$ & $0.6393740764$ \\
$0.94$ & $2.9700851395$ & $0.9049055446$ & $0.6259982580$ \\
$0.95$ & $3.3195687439$ & $0.9207272500$ & $0.6194025003$ \\
$0.96$ &                & $0.9365621489$ & $0.6128702685$ \\
$0.98$ &                & $0.9682645833$ & $0.6000015959$ \\
\end{tabular}
}
\caption{Upper bounds on the decrement for a full and an optimal Newton step and optimal damping coefficient.}
\label{table:numerical}
\end{table}

In Fig.~\ref{fig:critical}, right, the solution curves of problem \eqref{two_point_bdval} corresponding to the value $\gamma = 1$ (full Newton step) and different $a$ are depicted. The resulting bound on the decrement after the iteration, listed in column 2 of Table \ref{table:numerical}, is depicted in Fig.~\ref{fig:bounds}, left, as the dashed line. As an analytic approximation of the optimal bound on the interval $[0,\frac23]$ one may use the upper bound
\begin{equation} \label{approx_full_bound}
\tilde{\underline{\lambda}} = 1.01 \cdot \overline{\lambda}^2 + 1.02 \cdot \overline{\lambda}^4,
\end{equation}
which is tight up to an error of $0.013$.

For the optimal step length the upper bound $\underline{\lambda}$ is depicted in Fig.~\ref{fig:bounds}, left, as the solid line. An asymptotic analysis of ODE \eqref{planarODE} for small values of $a$ yields the expansion
$\underline{\lambda} = \overline{\lambda}^2 - \frac14\overline{\lambda}^4\log\overline{\lambda} + \left( \frac{\log 2}{2} - \frac{1}{16} \right)\overline{\lambda}^4 + o(\overline{\lambda}^5)$. On the whole interval $[0,1]$ the analytic expression
\begin{equation} \label{approx_opt_bound}
\tilde{\underline{\lambda}} = \overline{\lambda}^2 - 0.556 \cdot \overline{\lambda}^4 \cdot \log\overline{\lambda}
\end{equation}
yields an upper bound which is tight up to an error of $0.007$.

An asymptotic analysis of ODE \eqref{damping_eq} for small values of $a$ leads to the expansion $\gamma^* = 1 - \frac{\overline{\lambda}^3}{2} + \frac{\overline{\lambda}^4}{4} + O(\overline{\lambda}^5\log \overline{\lambda})$ of the optimal damping coefficient. The analytic approximation
\begin{equation} \label{approx_opt_step}
\gamma^* \approx 1 - 0.95\cdot\overline{\lambda}^3 + 0.53\cdot\overline{\lambda}^4
\end{equation}
has an error of $0.008$ on the whole interval $[0,1]$. Unlike the situation in one dimension, in multiple dimensions the upper bound on the Newton decrement is a smooth function of the damping coefficient. Therefore a small deviation from the optimal value of $\gamma$ will result only in an increase of the upper bound $\underline{\lambda}$ by a term of second order.

\section{Application to path-following methods} \label{sec:path}

In this section we demonstrate how the analysis of the Newton iterate derived above serves to tune the parameters of a path-following method in a way leading to a significant performance gain.

For ease of implementation we consider a basic primal short-step path-following method for a random dense semi-definite program (SDP). Although it is in general not competitive with long-step methods, it serves to demonstrate the performance gain from an optimized tuning of its parameters. We shall compare four different setups, which differ by the used step length $\gamma$ and the policy of updating the parameter $\tau_k$ on the central path (see Section \ref{sec:intro} for a description of the method). The updating policy is determined by the choice of the parameter $\overline{\lambda}$, which equals the decrement before the Newton step.

Let us compute this parameter for the full and the optimal Newton step, respectively. We have to maximize the difference $\overline{\lambda} - \underline{\lambda}(\overline{\lambda})$ with respect to $\overline{\lambda}$. A numerical analysis of the optimal upper bounds $\underline{\lambda}(\overline{\lambda})$ for $\gamma = 1$ and $\gamma = \gamma^*$ yields the following results.

In the case of a full Newton step the optimal parameters $\lambda^*,\lambda_*$ are approximately given by $0.394257$, $0.175841$, respectively. The maximal difference evaluates to $\lambda^* - \lambda_* \approx 0.2184159486$, which is a 55\% improvement over the respective value presented in Section \ref{sec:intro}.

For an optimally damped Newton step $\lambda^*,\lambda_*$ are approximately given by $0.442946$, $0.212945$, respectively. The maximal difference evaluates to $\lambda^* - \lambda_* \approx 0.2300010331$, which is a 40\% improvement with respect to the bound from the intermediate Newton step in \cite[Theorem 5.2.2.3]{Nesterov18book}. The optimal step length at $\overline{\lambda} = \lambda^*$ is given by $\gamma^* \approx 0.944679$.

The setups of the path-following methods use the following parameter values:
\begin{itemize}
\item traditional method with full step, $\gamma = 1$, $\overline{\lambda} = 0.2291$;
\item tight method with full step, $\gamma = 1$, $\overline{\lambda} = 0.394257$;
\item traditional method with intermediate step, $\gamma = 0.9384$, $\overline{\lambda} = 0.2910$;
\item tight method with optimal step, $\gamma = 0.944679$, $\overline{\lambda} = 0.442946$.
\end{itemize}

The progress along the central path for the four setups is shown in Fig.~\ref{fig:sdp}. Optimizing the parameters of the method leads to approximately halving the number of iterations in comparison with the most simple setup.

\begin{figure}
\centering
\includegraphics[width=12.40cm,height=5.05cm]{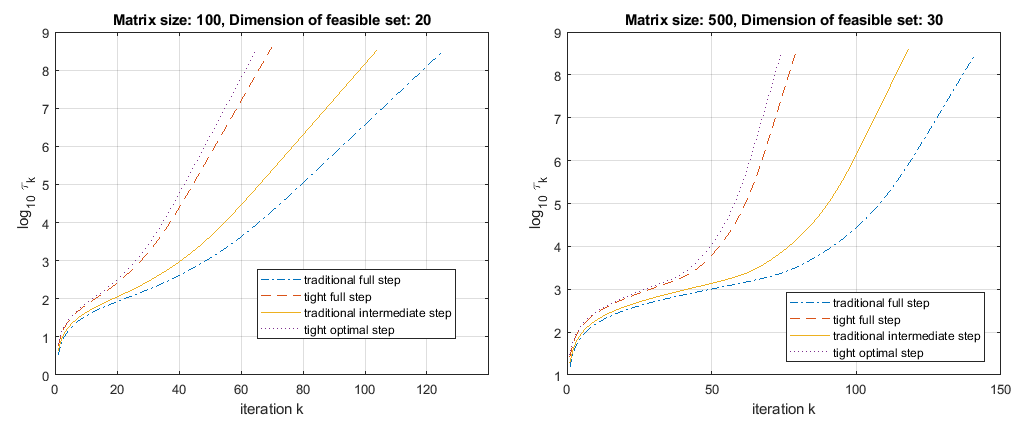}
\label{fig:sdp}
\caption{Performance of a short-step path-following method on a random SDP: growth of the parameter $\tau_k$ for the different choices of $\gamma,\overline{\lambda}$. }
\end{figure}

\section{Conclusion} \label{sec:conclusion}

We first furnish an interpretation of the results. Let us imagine the worst-case behaviour of the self-concordant function on the segment between the iterates $x_k,x_{k+1}$ as the response of an adversarial player to our choice of the damping coefficient. The goal of this player is to maximize the Newton decrement $\rho_{k+1}$ at the next iterate. Since the control which is at the disposal of the adversarial player affects the third derivative of the function, we can roughly assume that he manipulates the acceleration of the gradient.

First consider the case when the function is defined on an interval. Here the adversarial player has two different options. One is to maximally decelerate the gradient in order to prevent it from reaching zero at the end-point of the interval. This strategy will pay off more if we choose a smaller step length. The other strategy is to first maximally accelerate the gradient, in order to give it enough velocity to overshoot. At some point, corresponding to the crossing of the switching curve in Fig.~\ref{fig:1synthesis}, the gradient is again decelerated by decreasing the Hessian, because the effect of a smaller denominator $F''$ in the objective function overweighs the effect of a larger gradient $F'$, which enters in the numerator. This strategy pays off more if we choose a larger step length. Our optimal strategy will therefore be to choose that value of the damping coefficient which results in the same objective for both strategies of the adversarial player, i.e., we choose the initial point $(-a\gamma,-a)$ on the dispersion curve in Fig.~\ref{fig:1synthesis}.

In the case of a multi-dimensional domain of definition the adversarial player has more options. In addition to acceleration or deceleration of the gradient in the direction of movement, he may boost it in a perpendicular direction. Here he may choose this perpendicular direction arbitrarily, but once it is chosen, it is optimal to keep the acceleration vector in the plane spanned by the direction of movement and this particular direction. If the damping coefficient is large enough, more precisely if it corresponds to an initial point $(t,y_1)$ beyond the critical curve in Fig.~\ref{fig:critical}, left, the optimal strategy of the adversarial player is then indeed a mixture of boosts in the parallel and the perpendicular direction. Here the parallel component may be an acceleration or a deceleration, but the perpendicular component is always increased. For smaller damping coefficients the optimal strategy is a pure deceleration of the gradient in the direction of movement.

\medskip

Let us now summarize our findings. For a given value $\overline{\lambda}$ of the decrement at the initial point and given step length $\gamma$ the worst-case value $\underline{\lambda}$ of the decrement after the iteration can be computed by solving the two-point boundary value problem \eqref{two_point_bdval}. For a full Newton step, i.e., $\gamma = 1$, we provided both the numerical values of the bound on a grid (column 2 of Table \ref{table:numerical}) and the suboptimal analytic approximation \eqref{approx_full_bound}. To find the optimal value $\gamma^*$ of the damping coefficient and the corresponding minimal $\underline{\lambda}$ for a given initial value $\overline{\lambda}$ one needs to integrate two scalar ODEs. The numerical values of these quantities can be found in columns 4 and 3 of Table \ref{table:numerical}, respectively. In addition we provided the analytic approximations \eqref{approx_opt_step},\eqref{approx_opt_bound}. Formula \eqref{approx_opt_step} or any other reasonably simple approximation of the last column in Table \ref{table:numerical} can be used in any context when a self-concordant function is minimized by the Newton method.

Tuning of a path-following method using a predetermined step length $\gamma(\overline{\lambda})$ can be accomplished by maximizing the difference $\overline{\lambda} - \underline{\lambda}(\overline{\lambda})$ with respect to $\overline{\lambda}$. The parameters to be used are then the maximizer $\lambda^*$ and the corresponding damping coefficient $\gamma(\lambda^*)$. This computation needs to be done only once, thereafter the two obtained values can be encoded in the method. In Section \ref{sec:path} we computed these parameter values for two strategies, namely for the full Newton step $\gamma = 1$ and for the optimal Newton step, which minimizes $\underline{\lambda}$ with respect to $\gamma$.

\section*{Acknowledgments}

The paper was written in coronavirus quarantine at Moscow Institute of Physics and Technology. The author would like to thank the medical staff for having provided appropriate working conditions.

\bibliographystyle{plain}
\bibliography{interior_point,optimiz,convexity,opt_control}

\end{document}